\newcommand{\tr}{^\top}
\newcommand{\bbb}{\boldsymbol}
\newcommand{\rr}{\mathbb R}
\newtheorem*{lemma}{Lemma}
\begin{document}

\title[Sherman-Morrison-Woodbury for the Mulitple Shooting Method is a Bad Idea]{Using the Sherman-Morrison-Woodbury Formula to Solve the System of Linear Equations from the Standard Multiple Shooting Method for a Linear Two Point Boundary-Value Problem is a Bad Idea}

\author{Ivo Hedtke}
\address{Mathematical Institute, University of Jena, D-07737 Jena, Germany.}
\thanks{The author was supported by the Studienstiftung des Deutschen Volkes.}
\email{Ivo.Hedtke@uni-jena.de}

\begin{abstract}
We use the standard multiple shooting method to solve a linear two point boundary-value problem. To ensure that the solution obtained by combining the partial solutions is continuous and satisfies the boundary conditions, we have to solve a system of linear equations. Our idea is to first solve a bidiagonal system related to the original system of linear equations, and then update it with the Sherman-Morrison-Woodbury formula. 
We study the feasibility, the numerical stability and the running time of this method. The results are: The method described above has the same stability problems like the well known Condensing method. The running time analysis shows that the new method is slower than the Condensing method. Therefore we recommend not to use the method described in this article. 
\end{abstract}

\keywords{Sherman-Morrison-Woodbury formula,
standard multiple shooting method,
linear two point boundary-value problem,
Condensing method}

\subjclass[2000]{34B05, 15A06}

\maketitle

\section{Introduction}
\noindent We solve the linear two point boundary-value problem
\begin{align*}
  \mathscr L \bbb{x}(t) &:= \dot{\bbb x}(t) - \bbb{A}(t) \bbb{x}(t)= \makebox[20mm][l]{$\bbb{r}(t),$} t\in [a,b]\\
  \mathscr B \bbb{x}(t) &:= \bbb{B}_a\bbb{x}(a) + \bbb{B}_b\bbb{x}(b) = \bbb{\beta}
\end{align*}
with the standard multiple shooting method, where $\bbb x(t)$, $\bbb r(t)\colon [a,b] \to \rr^n$, $\bbb \beta \in \rr^n$, $\bbb A(t)\colon [a,b] \to \rr^{n \times n}$ and $\bbb B_a$, $\bbb B_b \in \rr^{n \times n}$. We divide the interval $[a,b]$ with the shooting points
\begin{gather*}
  a = \tau_0 < \tau_1 < \ldots < \tau_{m-1} < \tau_m = b
\end{gather*}
into $m$ segments $[\tau_j,\tau_{j+1}]$. We use the principle of superposition on each segment to find the solution
\begin{gather*}
  \bbb x_j(t) = \bbb X(t;\tau_j)\bbb c_j + \bbb v(t;\tau_j),
\end{gather*}
where $\bbb c_j$ is a constant vector. $\bbb X(t;\tau_j)$ is a fundamental system which fulfills the IVP
\begin{gather*}
  \mathscr L \bbb X(t;\tau_j) = \bbb 0, ~~ t\in [\tau_j,\tau_{j+1}], \qquad \bbb X(\tau_j;\tau_j) = \bbb I.
\end{gather*}
$\bbb v(t;\tau_j)$ is an inhomogeneous solution of the ODE and fulfills
\begin{gather*}
  \mathscr L \bbb v(t;\tau_j) = \bbb r(t), ~~ t\in [\tau_j,\tau_{j+1}], \qquad \bbb v(\tau_j;\tau_j) = \bbb 0.
\end{gather*}
The problem now consists in determining the vectors $\bbb c_j$ in such a way, that
\begin{enumerate}
 \item the function $\bbb x(t)$ pieced together by the $\bbb x_j(t)$ is continuous and
 \item satisfies the boundary conditions.
\end{enumerate}
We define $\bbb X_j := \bbb X(\tau_{j+1};\tau_j)$ and $\bbb v_j := \bbb v(\tau_{j+1};\tau_j)$. To satisfy the boundary conditions we focus on $\mathscr B \bbb x(t) = \bbb\beta$:
\begin{gather}\label{Rand}
  \bbb B_a \bbb c_0 + \bbb B_b\bbb X_{m-1}\bbb c_{m-1}^{} = \bbb \beta - \bbb B_b\bbb v_{m-1}.
\end{gather}
To ensure that $\bbb x(t)$ is a continuous function we need
\begin{gather*}
  \bbb x_{k-1}(\tau_k) = \bbb x_k(\tau_k), \qquad k=1,\ldots,m-1,
\end{gather*}
which yields to the conditions
\begin{gather}\label{Stetig}
  \bbb c_k - \bbb X_{k-1}\bbb c_{k-1} = \bbb v_{k-1}, \qquad k=1,\ldots,m-1.
\end{gather}
Now we collect equation \eqref{Rand} and the $m-1$ equations \eqref{Stetig} in
the following system of linear equations:
\begin{gather}\label{LGLS}
\bbb M\bbb c=\bbb q,
\end{gather}
where we define $\bbb Y_j := -\bbb X_j$ and
\begin{gather*}
\bbb M:=\begin{bmatrix}
          \bbb Y_0 & \bbb I\\
          & \bbb Y_1 & \bbb I\\
          &&\ddots & \ddots\\
          &&& \bbb Y_{m-2} & \bbb I\\
          \bbb B_a & & & & \bbb B_b\bbb X_{m-1}
         \end{bmatrix}, ~  
\bbb c:=\begin{pmatrix}
     \bbb c_0\\\bbb c_1\\\vdots\\\bbb c_{m-1}
    \end{pmatrix}, ~
\bbb q:= \begin{pmatrix}
     \bbb v_0\\
     \vdots\\
     \bbb v_{m-2}\\
     \bbb \beta  - \bbb B_b\bbb v_{m-1}
    \end{pmatrix}.
\end{gather*}
Note that $\bbb c$, $\bbb q \in \rr^{mn}$ and $\bbb
M \in \rr^{mn \times mn}$. It is known that $\bbb M$ is regular if
we assume that the BVP has an unique solution. In this case
\begin{gather}\label{ESV}
\bbb N:= \bbb B_a + \bbb B_b\bbb X(b;a)
\end{gather}
is regular, too. (see \cite[Satz 8.1 (Theorem 8.1)]{hermann})

\section{The aim of this work} 
\noindent There exists the well known method \emph{Condensing} to solve the system \eqref{LGLS} (see Section \ref{Condensing}). Because of the special structure of $\bbb M$ it is pretty obvious to try to find the solution in the following way: First solve the bidiagonal system from \eqref{bidiag} and then update the solution with the Sherman-Morrison-Woodbury formula.
In this paper we study the feasibility, the numerical stability and the running time of this method.

\section{The Sherman-Morrison-Woodbury formula} 
\noindent Let $\bbb A$ be a regular $\ell
\times \ell$ matrix and $\bbb U$ and $\bbb V$ be two $\ell \times p$ matrices.
If $\bbb I_p+\bbb V\tr \bbb A^{-1}\bbb U$ is regular, then
\begin{gather}\label{Sherman}
 (\bbb A+\bbb U\bbb V\tr)^{-1} = \bbb A^{-1} - \bbb A^{-1}\bbb U
 (\bbb I_p+\bbb V\tr \bbb A^{-1}\bbb U)^{-1}\bbb V\tr \bbb A^{-1}.
\end{gather}
holds.

\section{Is it possible to use the Sherman-Morrison-Woodbury formula to solve $\bbb M \bbb c = \bbb q$?}

\noindent First, we have to split $\bbb M$ into two matrices $\bbb M = \mathcal M +
\mathcal U$,
where $\mathcal U$ can be written in the form $\mathcal U =\bbb U\bbb V\tr$
with $\bbb U$, $\bbb V$ $\in \rr^{mn\times n}$. For this we define 
\begin{gather*}
   \bbb U = \left[\bbb 0,\ldots,\bbb 0, \bbb B_a\tr\right]\tr \qquad \text{and}
\qquad
   \bbb V\tr = [\bbb I_n,\bbb 0,\ldots,-\bbb L],
\end{gather*}
where $\bbb L:= \bbb X_0^{-1}\cdots \bbb X_{m-2}^{-1}$. Therefore we have
\begin{gather*}
\mathcal U =  \bbb U\bbb V\tr =\begin{bmatrix}
          \bbb 0&\cdots &\bbb 0\\
          &\ddots\\
          \bbb B_a & & -\bbb B_a \bbb L
              \end{bmatrix},
\end{gather*}
and
\begin{gather}\label{bidiag}
 \mathcal M = \bbb M - \mathcal U =
         \begin{bmatrix}
          \bbb Y_0 & \bbb I\\
          & \bbb Y_1 & \bbb I\\
          &&\ddots & \ddots\\
          &&& \bbb Y_{m-2} & \bbb I\\
          & & & & \mathcal B
         \end{bmatrix},
\end{gather}
where $\mathcal B := \bbb B_b\bbb X_{m-1} + \bbb B_a \bbb L$.

Now we have to check that $\mathcal M$ is regular. Because of
\begin{gather*}
\det \mathcal M = \det\mathcal B\prod\nolimits_{j=0}^{m-2}\det \bbb Y_j,
\end{gather*}
it follows that $\det \mathcal M \neq 0$ iff $\det \mathcal B \neq 0$, because
the $\bbb Y_j$ are fundamental systems. But $\mathcal B = \bbb N \bbb L$ and
$\bbb N$ and $\bbb L$ are both regular. This follows from\enlargethispage{\baselineskip}
\begin{gather}\label{Xba}
\bbb X(b;a) = \prod\nolimits_{j=1}^m \bbb X_{m-j}.
\end{gather}
This shows that $\mathcal M$ is regular.

Finally we have to check that $\bbb I_n+\bbb V\tr \mathcal M^{-1}\bbb U$ is
regular. First we need an auxiliary result:

 \begin{lemma}
 Given $m$ regular $n \times n$ matrices $\bbb D_i$. Then, the matrix
  \begin{gather*}
   \Delta := \begin{bmatrix}
         \bbb D_0 & \bbb I_n\\
         & \bbb D_1 & \bbb I_n\\
         & & \ddots & \ddots\\
         & & & \bbb D_{m-2} & \bbb I_n\\
         & & & & \bbb D_{m-1}
        \end{bmatrix}
  \end{gather*}
  is regular and
  \begin{gather*}
   \Delta^{-1} = \begin{bmatrix}
         \bbb D_0^{-1} & -(\bbb D_1\bbb D_0)^{-1} & (\bbb D_2\bbb D_1\bbb
D_0)^{-1} & \ldots &
(-1)^{m-1}(\bbb D_{m-1}\cdots \bbb D_0)^{-1}\\
         & \bbb D_1^{-1} & -(\bbb D_2\bbb D_1)^{-1}\\
         & & \ddots\\
         & & & \bbb D_{m-2}^{-1} & -(\bbb D_{m-1} \bbb D_{m-2})^{-1}\\
         & & & & \bbb D_{m-1}^{-1}
        \end{bmatrix}
  \end{gather*}
  holds.
 \end{lemma}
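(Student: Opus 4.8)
The plan is to get regularity almost for free and then to confirm the displayed formula by a direct block multiplication. Since $\Delta$ is block upper triangular, its determinant is the product of the determinants of its diagonal blocks, $\det\Delta = \prod_{i=0}^{m-1}\det\bbb D_i$; as every $\bbb D_i$ is regular this is nonzero, so $\Delta$ is regular and its inverse is the unique matrix $\Gamma$ with $\Delta\Gamma=\bbb I_{mn}$. It then suffices to take the matrix displayed in the statement, name it $\Gamma$, and verify $\Delta\Gamma=\bbb I_{mn}$ blockwise.

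Indexing the $n\times n$ blocks by $i,k\in\{0,\ldots,m-1\}$, the blocks of $\Delta$ are $\Delta_{ii}=\bbb D_i$, $\Delta_{i,i+1}=\bbb I_n$, and $\bbb 0$ otherwise, while the candidate has
\[
\Gamma_{ik} = \begin{cases}(-1)^{k-i}(\bbb D_k\bbb D_{k-1}\cdots\bbb D_i)^{-1}, & k\ge i,\\ \bbb 0, & k<i.\end{cases}
\]
Because each block row of $\Delta$ has at most two nonzero entries, the product collapses to $(\Delta\Gamma)_{ik}=\bbb D_i\Gamma_{ik}+\Gamma_{i+1,k}$ for $i<m-1$, and to $(\Delta\Gamma)_{m-1,k}=\bbb D_{m-1}\Gamma_{m-1,k}$ in the last block row. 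I would then split into the cases $k<i$, $k=i$, $k>i$.

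For $k<i$ both summands vanish; for $k=i$ the first term is $\bbb D_i\bbb D_i^{-1}=\bbb I_n$ and the second vanishes, so the diagonal comes out correct and the last block row is settled by these two subcases alone. The only case with content is $k>i$, and here the one identity I rely on is that inversion reverses order, giving $\bbb D_i(\bbb D_k\cdots\bbb D_i)^{-1}=(\bbb D_k\cdots\bbb D_{i+1})^{-1}$; the two summands then become $(-1)^{k-i}$ and $(-1)^{k-i-1}$ times the same block $(\bbb D_k\cdots\bbb D_{i+1})^{-1}$, so their opposite signs cancel and the off-diagonal blocks vanish as required. I expect no genuine obstacle; the only point to watch is the non-commutativity of the matrix products, which is exactly why I keep the factors in the fixed order $\bbb D_k\cdots\bbb D_i$ and invoke order reversal explicitly rather than cancelling $\bbb D_i$ naively. (If one prefers an inductive argument, the same cancellation drives a proof by induction on $m$ via expansion along the first block column, but the direct verification above is shorter.)
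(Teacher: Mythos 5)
Your proof is correct and takes essentially the same approach as the paper: regularity via $\det\Delta=\prod_{i=0}^{m-1}\det\bbb D_i\neq 0$ for a block triangular matrix, followed by direct verification that the displayed candidate is the inverse. The paper merely asserts that the product identities ``can easily be verified''; your blockwise computation, with the order-reversal identity $\bbb D_i(\bbb D_k\cdots\bbb D_i)^{-1}=(\bbb D_k\cdots\bbb D_{i+1})^{-1}$ driving the sign cancellation, is precisely that verification made explicit (and checking $\Delta\Gamma=\bbb I_{mn}$ alone suffices, as you note, once regularity is established).
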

 
 \begin{proof}
 It holds $\det \Delta = \prod_{j=0}^{m-1} \det \bbb D_j \neq 0$.
$\Delta\Delta^{-1}=\bbb I_{mn}$ and $\Delta^{-1}\Delta = \bbb I_{mn}$ can easily
be
verified.
 \end{proof}
 
Now we go back to the matrix $\bbb I_n+\bbb V\tr \mathcal M^{-1}\bbb U$. With
$\mathcal M^{-1}_j$ we denote the $j$th column of $\mathcal M^{-1}$ and we
write $\mathcal M^{-1}_{ij}$ for the $n\times n$ sub-matrix in the $i$th row and
$j$th column of $\mathcal M^{-1}$. With the lemma above and the new notation we
get
\begin{align*}
\bbb V\tr \mathcal M^{-1}\bbb U &= [\bbb I_n,\bbb 0,\ldots,\bbb 0,-\bbb L
][\mathcal M_1^{-1} \, | \, \ldots \, | \, \mathcal M_m^{-1}]
\left[\bbb 0,\ldots,\bbb 0, \bbb B_a\tr\right]\tr\\
&= [ \mathcal M^{-1}_{11} - \bbb L \mathcal M^{-1}_{m1} \, | \, \ldots \, | \,
 \mathcal M^{-1}_{1m} - \bbb L \mathcal M^{-1}_{mm}]\left[\bbb 0,\ldots,\bbb 0,
\bbb B_a\tr\right]\tr\\
&= \mathcal M^{-1}_{1m}\bbb B_a - \bbb L \mathcal M^{-1}_{mm}\bbb B_a.
\end{align*}
With the special structure of $\mathcal M^{-1}$ we can calculate the two
sub-matrices $\mathcal M^{-1}_{1m}$ and $\mathcal M^{-1}_{mm}$ very easy:
$\mathcal M^{-1}_{mm} = \mathcal B^{-1}$ and
\begin{align*}
\mathcal M^{-1}_{1m} &= (-1)^{m-1} \left(\mathcal B\prod_{j=2}^m \bbb
Y_{m-j}\right)^{-1} = (-1)^{m-2} \bbb Y_0^{-1} \cdots \bbb Y_{m-2}^{-1}
\mathcal B^{-1}\\
&= \bbb X_0^{-1} \cdots \bbb X_{m-2}^{-1} \mathcal B^{-1} = \bbb L \mathcal
B^{-1}.
\end{align*}
Now it follows that
\begin{gather*}
\bbb V\tr \mathcal M^{-1}\bbb U = \mathcal M^{-1}_{1m}\bbb B_a - \bbb L \mathcal
M^{-1}_{mm}\bbb B_a = \bbb L \mathcal B^{-1}\bbb B_a - \bbb L \mathcal
B^{-1}\bbb B_a = \bbb 0.
\end{gather*}
The result above shows that $\bbb I_n+\bbb V\tr \mathcal M^{-1}\bbb U=\bbb I_n$
is regular and we can use the Sherman-Morrison-Woodbury formula to solve
\eqref{LGLS}.

\section{Solving $\mathcal M \bbb c = \bbb q$ with the
Sherman-Morrison-Woodbury formula}
\noindent With \eqref{Sherman} the solution of \eqref{LGLS} can now be expressed as
\begin{align*}
\bbb c &= \bbb M^{-1}\bbb q = (\mathcal M + \mathcal U)^{-1}\bbb q = \mathcal M
^{-1}\bbb q -
\mathcal M^{-1} \bbb U (\bbb I_n + \bbb V\tr \mathcal M^{-1} \bbb U)^{-1}\bbb
V\tr \mathcal M^{-1}\bbb q\\
&=\mathcal M ^{-1}\bbb q - \mathcal M^{-1} \bbb U\bbb V\tr \mathcal M^{-1}\bbb
q = \mathcal M ^{-1}\bbb q - \mathcal M^{-1} \mathcal U \mathcal M^{-1}\bbb
q.
\end{align*}
This gives us an algorithm to solve \eqref{LGLS}:
\begin{enumerate}
 \item Solve $\mathcal M \bbb \xi = \bbb q$.
 \item Solve $\mathcal M \bbb \zeta = \mathcal U \bbb \xi$.
 \item Calculate $\bbb c = \bbb \xi - \bbb \zeta$.
\end{enumerate}
First we study the problem (1.) in detail. We have to solve
\begin{gather*}
\begin{bmatrix}
          \bbb Y_0 & \bbb I\\
          & \bbb Y_1 & \bbb I\\
          &&\ddots & \ddots\\
          &&& \bbb Y_{m-2} & \bbb I\\
          & & & & \mathcal B
         \end{bmatrix}
         \begin{bmatrix}
\bbb \xi_0 \\ \bbb \xi_1 \\ \vdots \\ \bbb \xi_{m-2} \\ \bbb \xi_{m-1}
\end{bmatrix}
=
\begin{bmatrix}
\bbb q_0 \\\bbb q_1 \\ \vdots \\\bbb q_{m-2} \\\bbb q_{m-1}
\end{bmatrix}.
\end{gather*}
Therefore we solve $\mathcal B \bbb \xi_{m-1} = \bbb q_{m-1}$ and use recursion
to find the other $\bbb\xi_j$:
\begin{gather*}
\bbb Y_j \bbb \xi_j = \bbb q_j - \bbb \xi_{j+1}, \qquad j = m-2,\ldots,0.
\end{gather*}
We use the same method for our problem (2.). After we calculated
\begin{gather*}
\mathcal U \bbb \xi = \begin{bmatrix}
          \bbb 0&\cdots &\bbb 0\\
          &\ddots\\
          \bbb B_a & & -\bbb B_a \bbb L
              \end{bmatrix}
\begin{bmatrix}
\bbb \xi_0\\ \vdots\\ \bbb \xi_{m-1}
\end{bmatrix}
=
\begin{bmatrix}
\bbb 0\\ \vdots \\ \bbb 0 \\ \bbb B_a (\bbb \xi_0 - \bbb L \bbb \xi_{m-1})
\end{bmatrix},
 \end{gather*}
the resulting system of linear equations is
\begin{gather*}
\begin{bmatrix}
          \bbb Y_0 & \bbb I\\
          & \bbb Y_1 & \bbb I\\
          &&\ddots & \ddots\\
          &&& \bbb Y_{m-2} & \bbb I\\
          & & & & \mathcal B
         \end{bmatrix}
         \begin{bmatrix}
\bbb \zeta_0 \\ \bbb \zeta_1 \\ \vdots \\ \bbb \zeta_{m-2} \\ \bbb \zeta_{m-1}
\end{bmatrix}
=
\begin{bmatrix}
\bbb 0\\\bbb 0\\ \vdots \\ \bbb 0 \\ \bbb B_a (\bbb \xi_0 - \bbb L \bbb
\xi_{m-1})
\end{bmatrix}.
\end{gather*}
Again we first solve $\mathcal B \bbb \zeta_{m-1} = \bbb B_a (\bbb \xi_0 - \bbb
L \bbb \xi_{m-1})$ and then solve the remaining systems of linear equations
with recursion:
\begin{gather*}
\bbb Y_j \bbb \zeta_j = - \bbb\zeta_{j+1}, \qquad j = m-2,\ldots,0.
\end{gather*}

\section{Condensing}\label{Condensing}
\noindent We want to compare the new method above with the well known standard method from Stoer and Bulirsch. They solve \eqref{LGLS} in the following way (see \cite{Deuflhard} or \cite{StoerBulirsch}):
\begin{enumerate}
 \item Compute $\bbb E:= \bbb B_a + \bbb B_b\bbb X_{m-1}\cdots\bbb X_0$ and $\bbb u := \bbb q_{m-1} - \bbb B_b \bbb X_{m-1}(\bbb q_{m-2} + \bbb X_{m-2}\bbb q_{m-3} + \cdots + \bbb X_{m-2}\cdots \bbb X_1 \bbb q_0)$.
 \item Solve $\bbb E \bbb c_0 = \bbb u$.
 \item Compute the remaining $\bbb c_j$ with recursion: $\bbb c_{j+1} = \bbb q_j + \bbb X_j \bbb c_j$.
\end{enumerate}
In the first step of our new algorithm from the section above we solve $\mathcal B \bbb \xi_{m-1} = \bbb q_{m-1}$. Notice that $\mathcal B = \bbb N \bbb L$. But $\bbb N = \bbb E$ holds. This follows directly from \eqref{ESV} and \eqref{Xba}. That means our new algorithm has the same stability problems like the Condensing method. See \cite{Deuflhard} and \cite{Hermann75} for a detailed discussion.

Therefore we only analyse the number of flops used by the two algorithms to compare them.

\section{Running time analysis}
\noindent We use LU-factorization to solve the systems of linear equations. We assume that this needs $2/3n^3$ flops for a $n\times n$ system.

\begin{table}
\caption{Running time analysis for the Condensing method.}
\small
\begin{tabularx}{\linewidth}{lXl}
\hline
step & description & flops\\\hline\hline
1 & Compute $\bbb E$ and $\bbb u$. Because we compute the products of the $\bbb X_j$  matrices in $\bbb E$ we can use them to compute $\bbb u$, too. Therefore we need no extra product computations of matrices to compute $\bbb u$.\\
& \textbullet{} $m-1$ matrix-matrix multiplications for $\bbb E$ &$(m-1)(2n^3-n^2)$\\
& \textbullet{} one matrix addition for $\bbb E$ & $n^2$\\
& \textbullet{} $m-1$ matrix-vector products for $\bbb u$ & $(m-1)(2n^2-n)$\\
& \textbullet{} $m$ vector additions for $\bbb u$& $mn$\\\hline
2 & Solve $\bbb E \bbb c_0 = \bbb u$.& $2/3n^3$\\\hline
3 & Compute the remaining $\bbb c_j$ with recursion.\\
& \textbullet{} $m-1$ matrix-vector products & $(m-1)(2n^2-n)$\\
& \textbullet{} $m-1$ vector additions & $(m-1)n$
\\\hline\hline
$\sum$ & \multicolumn{2}{l}{$=2mn^3+3mn^2-4/3n^3-2n^2+n$ flops}\\
 \hline
\end{tabularx}
\label{tab:Condensing}
\end{table}

\begin{table}
\caption{Running time analysis of our new method.}
\small
\begin{tabularx}{\linewidth}{lXl}
\hline
step & description & flops\\\hline\hline
1 & Solve $\mathcal M \bbb \xi = \bbb q$.\\
1.1 & Solve $\mathcal B \bbb \xi_{m-1} = \bbb q_{m-1}$.\\
& \textbullet{} Compute $\bbb T:=\bbb L^{-1} = \bbb X_{m-2}\cdots \bbb X_0$. & $(m-2)(2n^3-n^2)$\\
& \textbullet{} Compute $\bbb N := \bbb B_a + \bbb B_b \bbb X_{m-1}\bbb T$. & $4n^3-n^2$\\
& \textbullet{} Solve $\bbb N \bbb s = \bbb q_{m-1}$. & $2/3n^3$\\
& \textbullet{} Compute $\bbb \xi_{m-1} = \mathcal B^{-1}\bbb q_{m-1} = \bbb L^{-1} \bbb N^{-1} \bbb q_{m-1} = \bbb T \bbb s$. & $2n^2-n$\\
1.2 & Use recursion to find the other $\bbb\xi_j$. & $(m-2)(2/3n^3+n)$\\
\hline
2 & Solve $\mathcal M \bbb \zeta = \mathcal U \bbb \xi$.\\
2.1 & Solve $\mathcal B \bbb \zeta_{m-1} = \bbb B_a (\bbb \xi_0 - \bbb
L \bbb \xi_{m-1})$.\\
& \textbullet{} Solve $\bbb T \bbb t = \bbb \xi_{m-1}$. & $2/3n^3$\\
& \textbullet{} Compute $\bbb B_a(\bbb \xi_0 - \bbb t)$. & $2n^2$\\
& \textbullet{} Solve $\bbb N \tilde{\bbb s} = \bbb B_a(\bbb \xi_0 - \bbb t)$.& $2/3n^3$ \\
& \textbullet{} Compute $\bbb \zeta_{m-1} = \bbb T\tilde{\bbb s}$. & $2n^2-n$\\
2.2 & Use recursion to find the other $\bbb\zeta_j$. & $(m-2)(2/3n^3)$\\
\hline
3 & Compute $\bbb c = \bbb \xi - \bbb \zeta$. & $mn$
\\\hline\hline
$\sum$ & \multicolumn{2}{l}{$=10/3mn^3 -mn^2+mn -2/3n^3+7n^2-4n$ flops}\\
 \hline
\end{tabularx}
\label{tab:New}
\end{table}

The running time of the Condensing method is analyzed in Table \ref{tab:Condensing}. For a running time analysis of our new method see Table \ref{tab:New}. The result is: The Condensing method is faster than the new method described above.

\section{Conclusion} We found a new algorithm to solve the system of linear equations from the boundary and continuity conditions with the Sherman-Morrison-Wood\-bury formula. This new method has the same stability problems like the Condensing method. Our new method is also slower than the Condensing method. Therefore it is not recommendable to use the Sherman-Morrison-Woodbury formula in this case.

\end{document}